\newcommand{\N}{\mathbb N}
\newcommand{\Z}{\mathbb Z}
\newcommand{\R}{\mathbb R}
\newcommand{\eps}{\varepsilon}
\renewcommand{\P}{\mathbb P}
\newcommand{\E}{\mathbb E}
\newcommand{\F}{\mathcal F}
\newcommand{\ind}{\mathbbm 1}
\newtheorem{theorem}{Theorem}[section]
\theoremstyle{remark}
\newtheorem{example}[theorem]{Example}
\newtheorem{remark}[theorem]{Remark}
\theoremstyle{definition}
\newcommand{\isDistr}{\overset d=}
\newcommand{\dd}{\text{d}}
\newcommand{\PSRW}{P^{\textup{SRW}}}
\def\namedlabel#1#2#3{\begingroup
    #2%
    \def\@currentlabel{#3}%
    \phantomsection\label{#1}\endgroup
}
\newcommand{\logp}{\log^+\hspace{-.1cm}}
\title{New characterization of the weak disorder phase of directed polymers in bounded random environments}
\author{Stefan Junk}
\address{University of Tsukuba}
\email{sjunk@math.tsukuba.ac.jp}
\date{\today}
\keywords{directed polymers \and random environment}
\subjclass[2010]{60K37}
\begin{document}

\lineskip=0pt

\begin{abstract}
We show that the weak disorder phase for the directed polymer model in a bounded random environment is characterized by the integrability of the running supremum $\sup_{n\in \N}W_n^\beta$ of the associated martingale $(W_n^\beta)_{n\in \N}$. Using this characterization, we prove that $(W_n^\beta)_{n\in \N}$ is $L^p$-bounded in the whole weak disorder phase, for some $p>1$. The argument generalizes to non-negative martingales with a certain product structure.
\end{abstract} 

\maketitle

\section{The directed polymer model}\label{sec:poly}

\subsection{Overview}\label{sec:overview}

\smallskip The \emph{directed polymer model} was introduced in the physics literature to describe the folding of long molecule chains in a solution with random impurities. Mathematically, it is a model for random paths, called \emph{polymers}, that are attracted or repulsed by a space-time random environment with a parameter $\beta\geq 0$, called \emph{inverse temperature}, governing the strength of the interaction. In recent years, the model has attracted much interest because it is conjectured that in a certain \emph{low temperature} regime it belongs to the KPZ (Kardar-Parisi-Zhang) universality class of randomly growing surfaces. 
In contrast, we focus on the \emph{high temperature} phase, where it is known that the influence of the disorder disappears asymptotically and that the long-term behavior is diffusive. This weak disorder phase is characterized by whether an associated martingale, $(W_n^\beta)_{n\in\N}$, is uniformly integrable, which is known to hold for small $\beta$ if the spatial dimension is at least three.

\smallskip There is no closed-form characterization for the critical inverse temperature $\beta_{cr}$ and, in practice, the uniform integrability is not easy to analyze. Several important features of the weak disorder phase have been established, see for example \cite{CY03,CY06,BC20}, but many more papers have focused on a different, \emph{very high temperature} phase, which is characterized by $L^2$-boundedness of the associated martingale. This condition is known to be strictly stronger than uniform integrability \cite{B04}, which naturally raises the question of whether the $L^2$-regime is relevant mainly because of its computational convenience, or 
whether the model undergoes a true phase transition within the weak disorder phase with some quantifiable change in behavior. 

\smallskip Theorem~\ref{thm:main} below is an indicatation that no such transition occurs. Namely, we show that $(W_n^\beta)_{n\in\N}$ is $L^p$-bounded in the whole weak disorder phase, for some $p>1$ depending on $\beta$. We believe that our result will be useful for extending results from the $L^2$-phase to the whole weak disorder phase. A first step in this direction will be presented in the follow-up paper \cite{J21_2}, see Section~\ref{sec:followup}. Theorem~\ref{thm:main} also suggests that the weak disorder phase might be better characterized by the integrability of $\sup_{n\in\N}W_n^\beta$, which is a strictly stronger condition than uniform integrability, see Section~\ref{sec:background}, although at present, we can only prove this for bounded environments.

\subsection{Definition}\label{sec:defs}
We introduce the directed polymer with \emph{site disorder}, i.e. with disorder attached to the sites of the directed graph $\N\times\Z^d$. 
Let $(\omega_0,\P_0)$ be a probability measure on $(\Omega_0,\F_0)=(\R,\mathcal B(\R))$, where we assume 
\begin{align}\label{eq:exp_mom}
\E\big[e^{\beta| \omega_0|}\big]<\infty\qquad\text{ for all }\beta\geq 0,
\end{align}
and let $\Omega=\Omega_0^{\N\times\Z^d}$ and $\big(\omega=(\omega_{t,x})_{t\in\N,x\in\Z^d},\P=\bigotimes_{(t,x)\in\N\times\Z^d}\P_0\big)$. The \emph{energy} of a path $x=(x_t)_{t\in\N}$ up to time $n$ in environment $\omega$  is
\begin{align*}
H_n(\omega,x)&\coloneqq \textstyle \sum_{t=1}^n\omega_{t,x_t}.
\end{align*}
Let $(X=(X_n)_{n\in\N},\PSRW)$ denote the simple random walk on $\Z^d$ and define the \emph{polymer measure} by
\begin{align*}
\mu_{\omega,n}^{\beta}(\dd X)&\coloneqq  (Z_n^{\beta}(\omega))^{-1}e^{\beta H_n(\omega,X)} \PSRW(\dd X),
\end{align*}
where $Z_n^{\beta}$ is the normalizing constant, called the \emph{partition function} of the model. Under $\mu_{\omega}^{\beta}$, paths are attracted to areas of space-time where the environment is positive and repelled by areas where it is negative. Note that this is not a consistent family of probability measures, i.e., there is no infinite volume probability measure ``$\mu_{\omega,\infty}^{\beta}$'' whose projection to time $n$ agrees with $\mu_{\omega,n}^\beta$, simultaneously for all $n\in\N$.  The \emph{associated martingale} mentioned in the previous section is defined by 
\begin{align*}
W_n^{\beta}(\omega)&\coloneqq  Z_n^{\beta}(\omega)e^{-n\lambda(\beta)},
\end{align*}
where $\lambda(\beta)\coloneqq \log \E[e^{\beta\omega_0}]$.
As a non-negative martingale, it is clear that the limit $W_\infty^{\beta}\geq 0$ exists almost surely. We say that \emph{weak disorder} \eqref{eq:WD}, resp. \emph{strong disorder} \eqref{eq:SD}, holds if
\begin{align}\label{eq:WD}\tag{WD}
\P(W_\infty^{\beta}>0)&>0.\\
\label{eq:SD}\tag{SD}
\P(W_\infty^\beta=0)&=1.
\end{align}
Using assumption \eqref{eq:exp_mom}, it is not hard to see that $W_\infty^\beta$ satisfies a zero-one law, i.e., for all $\beta\geq 0$,
\begin{align}\label{eq:zero_one}
\P(W_\infty^\beta>0)\in\{0,1\}.
\end{align}
Finally, we say that the environment is \emph{upper bounded} \eqref{eq:upper_bd} or \emph{lower bounded} \eqref{eq:lower_bd} if there exists $K>1$ such that, respectively,
\begin{align}
\P_0\left([K,\infty)\right)&=0,\tag{U-Bd.}\label{eq:upper_bd}\\
\P_0((-\infty,-K])&=0.\tag{L-Bd.}\label{eq:lower_bd}
\end{align}

\subsection{Known results}\label{sec:known}

We highlight a few key results and refer to \cite{C17} for a detailed survey of the model. 

\smallskip  It is known \cite[Theorem 1.1]{CY03} that there exists a critical temperature $\beta_{cr}=\beta_{cr}(d)$ such that weak disorder \eqref{eq:WD} holds for $\beta<\beta_{cr}$ and strong disorder \eqref{eq:SD} holds for $\beta>\beta_{cr}$, with $\beta_{cr}(d)>0$ if and only if $d\geq 3$. Moreover, \eqref{eq:WD} is equivalent to uniform integrability of $(W_n^\beta)_{n\in\N}$ \cite[Proposition 3.1]{CY06}. 

\smallskip As mentioned in Section~\ref{sec:overview}, many paper consider a stronger condition than uniform integrability, namely $L^2$-boundedness. The second moment of $W_n^\beta$ has a nice representation involving two independent random walks, called \emph{replicas} in this context, and in particular one can explicitly compute the critical temperature $\beta_{cr}^{L^2}$ for $L^2$-boundedness. In contrast, no closed form expression is known for $\beta_{cr}$, but \cite{B04} has shown that $\beta_{cr}^{L^2}<\beta_{cr}$. Several important results have been obtained for the whole weak disorder phase, most notably a central limit theorem in probability \cite[Theorem 1.2]{CY06} for $(\mu_{\omega,n}^{\beta})_{n\in\N}$, but our understanding of the $L^2$-phase is much more complete. 

\smallskip To give an example, it is known that convergence in probability in the central limit theorem can be replaced by almost sure convergence \cite{IS88,B89,SZ96} in the $L^2$-phase, and one would expect that this extends to the whole weak disorder phase. Based in part on the results of this paper, some progress towards this extension for related models is obtained in the follow-up paper \cite{J21_2}. Further results that are at present only known in $L^2$-weak disorder are \cite[Theorem 6.2]{CY06}, \cite{CN20} and \cite{S95,V06}. We believe that Theorem~\ref{thm:main} will be a useful tool towards extending them to the whole weak disorder phase.

\smallskip We also mention that much research has focused on the strong disorder phase \eqref{eq:SD}. In particular, the one-dimensional case is a very active field of research because of its conjectured relation to the KPZ universality class and because exactly solvable models are known. The behavior of $\mu_{\omega,n}^{\beta}$ in strong disorder is radically different from the weak disorder phase that is the focus of the present article.

\subsection{$L^p$-boundedness in weak disorder}

We state the main result of the paper.

\begin{theorem}\label{thm:main}
\begin{enumerate}[leftmargin=.7cm,label=(\roman*)]
 \item Assume \eqref{eq:upper_bd}. If \eqref{eq:WD} holds, then 
\begin{align}\label{eq:sup_W}
\E\left[\sup_{n\in\N}W_n^{\beta}\right]<\infty
\end{align}
and there exists $p>1$ such that
\begin{align}\label{eq:Lp_W}
\sup_{n\in \N}\|W_n^\beta\|_p<\infty.
\end{align}
The interval of $p>1$ satisfying \eqref{eq:Lp_W} is open. In contrast, if \eqref{eq:SD} holds then, for all $t>1$,
\begin{align}\label{eq:lower_W}
\P\left(\sup_{n\in\N}W_n^{\beta}>t\right)\geq \frac{1}{4K^2t}.
\end{align}
\item Assume that \eqref{eq:lower_bd} and \eqref{eq:WD} hold. There exists $\eps>0$ such that
\begin{align}\label{eq:Leps_W}
\sup_{n\in\N}\E\left[(W_n^\beta)^{-\eps}\right]<\infty.
\end{align}
The set of $\eps>0$ satisfying \eqref{eq:Leps_W} is open.
\end{enumerate}
\end{theorem}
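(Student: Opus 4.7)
For the strong disorder lower bound in (i), I would apply optional stopping at $\tau_t = \inf\{n : W_n^\beta > t\}$. Upper boundedness gives the deterministic bound $W_n^\beta / W_{n-1}^\beta \leq e^{\beta K - \lambda(\beta)}$, so $W_{\tau_t}^\beta \leq e^{\beta K - \lambda(\beta)}\, t$ on $\{\tau_t < \infty\}$. Since $W_n^\beta \to 0$ a.s.\ under \eqref{eq:SD} while $W_n^\beta \ind_{\tau_t > n} \leq t$, dominated convergence combined with the identity $\E[W_{\tau_t \wedge n}^\beta] = 1$ yields $\E[W_{\tau_t}^\beta ; \tau_t < \infty] = 1$, whence $\P(W^* > t) \geq e^{\lambda(\beta)-\beta K}/t$. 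The sharper constant $1/(4K^2)$ should follow from a direct algebraic estimate of $e^{\beta K - \lambda(\beta)}$ using $\omega_0 \leq K$ together with $K>1$.

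The central tool for the weak-disorder statements is the Markov decomposition at a stopping time $\tau$: writing $\hat\mu_\tau^\beta(x) := \mu_{\omega,\tau}^\beta(X_\tau = x)$, one has $W_\infty^\beta = W_\tau^\beta \cdot \bar W_\infty^{\beta,\tau}$ with $\bar W_\infty^{\beta,\tau} := \sum_x \hat\mu_\tau^\beta(x)\, W_\infty^{\beta,(\tau,x)}$, where each $W_\infty^{\beta,(\tau,x)}$ is built from the environment strictly after time $\tau$ (hence jointly $\F_\tau$-independent) and distributed as $W_\infty^\beta$. I would prove a \emph{no-escape lemma}: under \eqref{eq:WD} there exist $\delta_0, c > 0$ such that $\P(\bar W_\infty^{\beta,\tau} \geq \delta_0 \mid \F_\tau) \geq c$ a.s.\ for every stopping time $\tau$. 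The argument is short: choose $\delta_1$ with $\P(W_\infty^\beta \geq 2\delta_1) \geq 1/2$, let $Y := \sum_x \hat\mu_\tau^\beta(x) \ind_{W_\infty^{\beta,(\tau,x)} \geq 2\delta_1}$, note that $Y \in [0,1]$ with $\E[Y \mid \F_\tau] \geq 1/2$, so reverse Markov gives $\P(Y \geq 1/4 \mid \F_\tau) \geq 1/3$, and on $\{Y \geq 1/4\}$ one has $\bar W_\infty^{\beta,\tau} \geq 2\delta_1 Y \geq \delta_1/2$. Applying this at $\tau = \tau_t$ yields $\P(W^* > t) \leq 3\, \P(W_\infty^\beta \geq \delta_1 t/2)$, so $\E[W^*] \leq 12/\delta_1 < \infty$.

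For the $L^p$-bound and the negative-moment bound, the same machinery yields $\sup_\tau \E[(W_\tau^\beta)^p] \leq (c\delta_0^p)^{-1} \E[(W_\infty^\beta)^p]$, so $L^p$-boundedness of the martingale is equivalent to $W_\infty^\beta \in L^p$; for (ii), Jensen gives $\E[(\bar W_\infty^{\beta,n})^{-\eps} \mid \F_n] \geq 1$, so $\sup_n \E[(W_n^\beta)^{-\eps}] \leq \E[(W_\infty^\beta)^{-\eps}]$ automatically. Both tasks thus reduce to integrability of a single random variable, $W_\infty^\beta$ or $(W_\infty^\beta)^{-1}$. From $\E[W^*] < \infty$ one obtains $W_\infty^\beta \in L\log L$ via Doob's $L\log L$ theorem, and I would upgrade this to $W_\infty^\beta \in L^{1+\eps}$ by a bootstrap that exploits the upper boundedness of the multiplicative increments to convert $L\log L$-integrability into polynomial decay of $\P(W_\infty^\beta > t)$. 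Openness would come out of the same bootstrap applied to any $p$ in the interval instead of to $L\log L$. Part (ii) is handled analogously under \eqref{eq:lower_bd}, with the roles of upper and lower tails reversed.

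The main obstacle is this last bootstrap step: while the no-escape lemma is essentially soft and uses only positivity of $W_\infty^\beta$, extracting quantitative polynomial tail decay of $W_\infty^\beta$ from $L\log L$-integrability requires a genuine iterative estimate that exploits both the product structure of $(W_n^\beta)$ and the upper boundedness of the environment, and I expect this is where most of the technical work lies.
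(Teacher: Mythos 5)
Your decomposition $W_{k+l}^\beta/W_k^\beta=\sum_x\mu^\beta_{\omega,k}(X_k=x)\,W_l^\beta\circ\theta_{k,x}$ is exactly the product structure the paper isolates as its hypothesis \eqref{eq:assume_convex}, and two of your three blocks are sound. The optional-stopping proof of the strong-disorder lower bound is correct and arguably cleaner than the paper's (which argues by contradiction through the $L^{1+\eps}$ estimate); note only that the relevant constant is the a.s.\ bound $K'=e^{\beta K-\lambda(\beta)}$ on $W^\beta_{n+1}/W^\beta_n$, which is \emph{not} controlled by an algebraic expression in $K$ alone (take $\omega_0=K$ with small probability $q$ and $0$ otherwise; then $K'\to 1/q$ as $\beta\to\infty$), so your $1/(K't)$ is the honest form of the bound. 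Your no-escape lemma with the reverse Markov inequality proves \eqref{eq:sup_W} correctly; the paper does the same thing with a concave test function $f_{\delta,\eps}$ evaluated at finite times rather than at $n=\infty$.

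The genuine gap is in the part that is the actual content of the theorem: \eqref{eq:Lp_W}, its openness, and \eqref{eq:Leps_W}. Reducing $L^p$-boundedness of $(W_n^\beta)$ to $W_\infty^\beta\in L^p$ is correct but is not progress (for a uniformly integrable martingale the two are equivalent anyway), and the step you defer to ``a bootstrap'' is precisely where the theorem is proved or not. Two specific problems: first, ``Doob's $L\log L$ theorem'' goes the wrong way — the converse you invoke, that $\E[\sup_n W_n^\beta]<\infty$ plus bounded multiplicative increments implies $L\log L$, is Gundy's theorem \cite{G69}; second, the natural implementation of your bootstrap, bounding the conditional tail of $\bar W^{\beta,\tau}_\infty$ by Markov's inequality, only yields $\P(W_\infty^\beta>t)\lesssim t^{-\alpha}$ with $\alpha<1$, which is not integrable to the power $1+\eps$; beating Markov requires a further idea. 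The paper closes the gap with a short absorption argument you should compare with: with $\tau=\inf\{n:W^\beta_n>t\}$ one has $W_n^\beta\leq K't\cdot W^\beta_n/W^\beta_\tau$ on $\{\tau\leq n\}$, so \eqref{eq:assume_convex} applied to $f(x)=x^{1+\eps}$ together with the monotonicity of $n\mapsto\E[(W_n^\beta)^{1+\eps}]$ gives $\E[(W^\beta_n)^{1+\eps}]\leq t^{1+\eps}+(K't)^{1+\eps}\,\P(\tau\leq n)\,\E[(W^\beta_n)^{1+\eps}]$; since \eqref{eq:sup_W} forces $\liminf_{t\to\infty} t\,\P(\sup_nW_n^\beta>t)=0$, one can fix $t$ with $\P(\tau\leq n)\leq 1/(4K'^2t)$ and then $\eps$ with $t^\eps\leq2$, making the coefficient at most $1/2$ and absorbing the last term into the left-hand side. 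The same computation with exponents $q\in(p,p+1)$ gives openness, and with $\tau=\inf\{n:W^\beta_n\leq1/t\}$ and $f(x)=x^{-\eps}$ it gives \eqref{eq:Leps_W}. Without this (or an equivalent quantitative step) the proposal does not prove the theorem.
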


In Section \ref{sec:gen_res}, we state a general result about non-negative martingales that implies Theorem \ref{thm:main}, and the proof is therefore postponed  until Section \ref{sec:proofs}. We make a few comments to put the result into perspective.

\begin{remark}
\begin{enumerate}[leftmargin=.7cm,label=(\roman*)]
 \item It is an interesting question whether \eqref{eq:upper_bd} is necessary. Theorem~\ref{thm:moments} below is certainly not valid without \eqref{eq:M_n_upper}, so a proof of \eqref{eq:Lp_W} without \eqref{eq:upper_bd} would have to be more specific to the the directed polymer model.
 \item The result can be taken as an indication that strong disorder \eqref{eq:SD} holds at $\beta_{cr}$. Indeed, otherwise Theorem~\ref{thm:main}(i) implies that $\sup_n\|W_n^{\beta_{cr}}\|_p<\infty$ for some $p>1$ while $(W_n^{\beta})_{n\in\N}$ is not uniformly integrable for any $\beta>\beta_{cr}$, which seems unlikely.
 \item In \cite{DE92}, a lower bound for $\beta_{cr}$ was obtained by estimating $\beta_{cr}^{L^p}$, the critical temperature for $L^p$-boundedness, for $p\in(1,2)$. Theorem~\ref{thm:main}(i) shows that such a lower bound can potentially be sharp. 
 \item One consequence of Theorem~\ref{thm:main}(i) is that in the $L^2$-phase, $\beta<\beta_{cr}^{L^2}$, there exists $\eps>0$ such that 
\begin{align*}
\sup_{n\in\N}\E\left[(W_n^\beta)^{2+\eps}\right]<\infty.
\end{align*}
The same result has been obtained with the help of hypercontractivity in \cite[display $(3.12)$]{FSZ20} in dimension $d=2$ in the so-called intermediate weak disorder phase.
\item We note that the zero-one law \eqref{eq:zero_one} is only necessary for part (ii). In particular, the conclusions of part (i) remain valid if assumption \eqref{eq:exp_mom} is replaced by $\E\left[e^{\beta\omega_0}\right]<\infty$  for all $\beta\geq 0$. 
Using the convension $e^{-\infty}=0$, we can therefore include degenerate models with $\P_0(\omega_0=-\infty)>0$. A natural example for such a degenerate model is oriented site or bond percolation, see \cite{Y08}. 
\end{enumerate}
\end{remark}

\subsection{Extensions and further research}\label{sec:followup}

We only present the result for the most commonly studied variation of this model, in discrete time and with the disorder attached to the sites of $\N\times\Z^d$, but Theorem~\ref{thm:moments} below applies to more generally:
\begin{itemize}
 \item The simple random walk $\PSRW$ can be replaced by other random walks.
 \item We can replace site disorder by \emph{bond disorder}.
 \item Finally, we may consider a model in continuous space-time with Poissonian disorder and $\PSRW$ replaced by Brownian motion.
\end{itemize}
All of these generalizations are discussed in the follow-up paper \cite{J21_2}. That paper also introduces a different tool for the directed polymer model, which is related to the so-called \emph{noise operator} appearing, for example, in the context of the hypercontractive inequality. For bond disorder, we give an alternative proof for the central limit theorem in probability from \cite{CY06} and we identify the large deviation rate function with that of the underlying random walk. For the Brownian polymer model in continuous space-time, we improve the convergence in the central limit theorem to almost sure convergence. To prove these results, we repeatedly use Theorem~\ref{thm:main}~(i) to justify interchanging limit and expectation.

\section{$L^p$-boundedness for martingales with product structure}\label{sec:general}

\subsection{Background}\label{sec:background}

Consider a non-negative martingale $(M_n)_{n\in\N}$ with $M_0=1$ and let $M_\infty:=\lim_{n\to\infty}M_n$ denote its almost sure limit. We have in mind a situation where $(M_n)_{n\in\N}$ is associated with a model undergoing a phase transition characterized by whether $M_\infty>0$ or $M_\infty=0$, similar to the directed polymer model. One can usually show that $M_\infty>0$ implies that $(M_n)_{n\in\N}$ is uniformly integrable, hence 
\begin{align}\label{eq:varphi}
\sup_n\E[\varphi(M_n)]<\infty
\end{align}
for some convex function $\varphi$ with $\lim_{x\to\infty}\frac{\varphi(x)}x=\infty$. In practice, \eqref{eq:varphi} is a rather weak integrability condition since $\varphi$ is not explicit and $\frac{\varphi(x)}x$ may grow extremely slow. We show that, under certain assumptions, $\P(M_\infty>0)>0$ implies 
\begin{enumerate}
 \item[(a)] $L^1$-boundedness of the running maximum $M_n^*:=\sup_{k=0,\dots,n}M_k$
 \item[(b)]  and $L^p$-boundedness of $(M_n)_{n\in\N}$ for some $p>1$. 
\end{enumerate}

Of course, (b) implies (a) by Doob's maximal inequality, but we need (a) to prove (b). We think it is surprising that in our fairly general setup, a \emph{lower} bound on $M_\infty$ yields an \emph{upper} bound on $M_n$. 

\smallskip We briefly give some context for both steps. For the implication (a), we note that $\E[M^*_\infty]<\infty$ is a stronger condition than uniform integrability, see \cite[Chapter II, Exercise 3.15]{RY99}. The structure of that counterexample is similar to the classical gambler's ruin problem, in that the martingale grows larger and larger before it is eventually absorbed in a much smaller value. To prove (a), we assume that $(M_n)_{n\in\N}$ has a certain ``product structure'' which excludes such behavior. Namely, under this assumption, the probability that $M_{k+l}$ is much smaller than $M_k$ can be compared with the probability that $M_{l}$ is much smaller than $M_0=1$, and hence with the probability that $M_\infty$ is small. The latter probability is bounded away from zero if $\P(M_\infty>0)>0$.

\smallskip For the implication (b), we recall from Doob's maximal inequality that
\begin{align*}
\|M_n^*\|_1\leq C(1+\E[M_n\logp M_n]).
\end{align*}
In \cite{G69}, it was shown that if $M_{n+1}/M_n$ is bounded, then the converse is also true, i.e. $\E[M_\infty^*]<\infty$ implies $\sup_n\E[M_n\logp M_n]<\infty$. 
Note that this is already an improvement on \eqref{eq:varphi}. The proof of the implication (b) is inspired by this result, in the sense that we use integrability of the running maximum $M_n^*$ to get a moment bound for $M_n$ itself. We obtain the stronger conclusion of $L^p$-boundedness by using the product structure introduced for implication (a).

\subsection{$L^p$-boundedness for certain martingales}\label{sec:gen_res}

We now state the main result of this section. The ``product structure'' mentioned above is \eqref{eq:assume_convex}.

\begin{theorem}\label{thm:moments}
Let $((\F_n)_{n\in\N},(M_n)_{n\in\N})$ be a non-negative martingale with $M_0=1$. Assume that for every $k,l\in\N$ and $f\colon\R_+\to\R$ convex, almost surely on $M_k>0$,
\begin{align}\label{eq:assume_convex}
\E\left[f\Big(\frac{M_{k+l}}{M_k}\Big)\Big|\F_k\right]\leq \E[f(M_l)].
\end{align}
Let $M_n^*:=\sup_{k=0,\dots,n}M_k$ and $M_\infty:=\lim_{n\to\infty}M_n$. Then the following hold.
\begin{enumerate}[label=(\roman*)]
 \item $\E[M_\infty^*]<\infty$ holds if 
\begin{align}\label{eq:M_infty_positive}
\P\left(M_\infty>0\right)>0.
\end{align}
 \item If \eqref{eq:M_infty_positive} holds and if there exists $K>1$ such that 
 \begin{align}
\P(M_{n+1}\leq KM_n)&=1\quad\text{ for all }n\in\N,\label{eq:M_n_upper}
\end{align}
then there exists $p>1$ such that
\begin{align}\label{eq:Lp_mart}
\sup_n\|M_n\|_p<\infty
\end{align}
Moreover, the interval of $p>1$ satisfying \eqref{eq:Lp_mart} is open.
 \item If $\P(M_\infty=0)=1$ and if \eqref{eq:M_n_upper} holds, then, for all $t>1$,
 \begin{align}\label{eq:strong}
\P(M_\infty^*>t)> \frac{1}{4K^2t}.
\end{align}

 \item If $\P(M_\infty>0)=1$ and if there exists $K>1$ such that 
 \begin{align}
\P(M_{n+1}\geq M_n/K)&=1\quad\text{ for all }n\in\N,\label{eq:M_n_lower}
\end{align}
then there exists $\eps>0$ such that
\begin{align}\label{eq:Leps}
\sup_n\E[M_n^{-\eps}]<\infty
\end{align} 
Moreover, the interval of $\eps>0$ satisfying \eqref{eq:Leps} is open.
\end{enumerate}

\end{theorem}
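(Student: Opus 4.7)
The plan is to derive all four statements from a common template: apply \eqref{eq:assume_convex} with a carefully chosen convex test function $f$, pass to the limit $l\to\infty$ using dominated convergence (when $f$ is bounded), and combine the resulting estimate with \eqref{eq:M_n_upper} or \eqref{eq:M_n_lower} to get a moment bound. The inequality \eqref{eq:assume_convex} extends to bounded stopping times by summing over their finitely many values, which I use freely below (after truncating $\tau$ to $\tau\wedge N$ and passing $N\to\infty$).

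The heart of part (i) is a conditional lower bound for $M_\infty$ in terms of $M_k$. Taking $f(x)=(a-x)^+$, which is convex and bounded by $a$, and letting $l\to\infty$ in \eqref{eq:assume_convex} yields, on $\{M_k>0\}$,
\[
\E\!\left[(a-M_\infty/M_k)^+ \,\big|\, \F_k\right]\leq \E\!\left[(a-M_\infty)^+\right]=a(1-p_0)+\int_0^a \P(0<M_\infty<s)\,ds,
\]
where $p_0:=\P(M_\infty>0)>0$. Bounding the left-hand side from below by $a\,\P(M_\infty=0|\F_k)$ and letting $a\to 0$ forces $\P(M_\infty=0|\F_k)\leq 1-p_0$; combined with L\'evy's 0-1 law for the tail event $\{M_\infty>0\}$, this upgrades the hypothesis to $\P(M_\infty>0)=1$. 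The right-hand side is then $o(a)$, and the Markov-type estimate $(a-c)\P(M_\infty<cM_k|\F_k)\leq \E[(a-M_\infty/M_k)^+|\F_k]$ with $c=a/2$ gives, for any $\delta>0$, some $c=c(\delta)>0$ with $\P(M_\infty\geq cM_k|\F_k)\geq 1-\delta$ on $\{M_k>0\}$. Applying this at $\tau_T:=\inf\{n:M_n>T\}$ yields $\P(M_\infty>cT)\geq (1-\delta)\P(M_\infty^*>T)$, hence
\[
\E[M_\infty^*]=\int_0^\infty \P(M_\infty^*>T)\,dT\leq \frac{1}{1-\delta}\int_0^\infty \P(M_\infty>cT)\,dT=\frac{\E[M_\infty]}{(1-\delta)\,c}<\infty.
\]

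Parts (iii) and (iv) are shorter. For (iii), optional stopping at $\tau:=\inf\{n:M_n>t/K\}$ together with $M_\infty=0$ a.s.\ and the uniform bound $M_n\leq t/K$ on $\{\tau>n\}$ give $\E[M_\tau\mathbf{1}_{\tau<\infty}]=1$ by dominated convergence; since $M_\tau\leq K\cdot(t/K)=t$ by \eqref{eq:M_n_upper}, $\P(\tau<\infty)\geq 1/t$, from which \eqref{eq:strong} follows after accounting for the $1/K$-truncation. For (iv), $f(x)=x^{-\eps}$ is convex on $(0,\infty)$, so \eqref{eq:assume_convex} gives the log-submultiplicativity $\E[M_{k+l}^{-\eps}]\leq \E[M_k^{-\eps}]\E[M_l^{-\eps}]$; combined with a running-minimum analogue of the argument for (i)--(ii), using the hypothesis $\P(M_\infty>0)=1$ directly and \eqref{eq:M_n_lower} in place of \eqref{eq:M_n_upper}, this yields $\sup_n\E[M_n^{-\eps}]<\infty$ for $\eps$ small, with openness of the admissible interval by log-convexity of $\eps\mapsto \log\E[M_n^{-\eps}]$.

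The main obstacle is part (ii): \eqref{eq:assume_convex} with $f(x)=x^p$ (convex for $p>1$) yields $\log\E[M_{k+l}^p]\leq\log\E[M_k^p]+\log\E[M_l^p]$, but this alone does not rule out exponential growth, since \eqref{eq:M_n_upper} only gives the trivial $\E[M_n^p]\leq K^{pn}$. Following the Gundy-inspired strategy of Section~\ref{sec:background}, I would first use \eqref{eq:M_n_upper} at $\sigma_t:=\inf\{k:M_k\geq t\}$ (where $M_{\sigma_t}\leq Kt$) and $\E[M_\infty^*]<\infty$ from part (i) to derive the $L\log L$-bound
\[
\sup_n\E[M_n\log^+ M_n]=\sup_n\int_1^\infty \frac{\E[M_n\mathbf{1}_{M_n\geq t}]}{t}\,dt\leq K\int_1^\infty \P(M_n^*\geq t)\,dt\leq K\,\E[M_\infty^*]<\infty.
\]
The delicate step is converting this $L\log L$-control into a uniform $L^{1+\delta}$-bound: I plan to combine it with the subadditivity of $n\mapsto\E[M_n\log^+ M_n]$ (which itself follows from \eqref{eq:assume_convex} applied to $f(x)=x\log^+ x$) and a bootstrap using the product structure at small scales, in order to show that $\log\E[M_n^{1+\delta}]$ stays bounded in $n$ for $\delta$ sufficiently small. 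Openness of the valid $p$-interval will then be a consequence of log-convexity of $p\mapsto \log\E[M_n^p]$.
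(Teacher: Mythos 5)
Your part (i) contains a genuine error: the claimed upgrade of $\P(M_\infty>0)>0$ to $\P(M_\infty>0)=1$ is false. The event $\{M_\infty>0\}$ is not a tail event, and L\'evy's 0--1 law only gives $\P(M_\infty=0\mid\F_k)\to\ind_{\{M_\infty=0\}}$; combined with your bound $\P(M_\infty=0\mid\F_k)\leq 1-p_0$ on $\{M_k>0\}$ this shows merely that, almost surely, $M_\infty=0$ forces $M_k=0$ for some finite $k$. The supercritical Galton--Watson martingale of Example~\ref{ex:GWP} with extinction probability in $(0,1)$ satisfies every hypothesis of the theorem and has $\P(M_\infty>0)<1$, so the claim cannot be right. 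Your subsequent estimate relies on $p_0=1$ to make the right-hand side $o(a)$ and on the choice $c=a/2$, which yields nothing when $p_0\leq 1/2$. The repair is to take $c=\theta a$ with $\theta<p_0$, giving $\P(M_\infty\geq \theta a M_k\mid\F_k)\geq 1-\tfrac{1-p_0}{1-\theta}-o(1)>0$ on $\{M_k>0\}$; with that fix your first-passage argument goes through and is in substance the same mechanism as the paper's, which instead sandwiches indicators by the concave function $\delta(x/\eps-1)\wedge 1$ and never needs $p_0=1$.

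The main gap is part (ii), which you explicitly leave as a plan. The paper does not pass through $L\log L$ at all: with $\tau=\inf\{n:M_n>t\}$, assumption \eqref{eq:M_n_upper} gives $M_\tau\leq Kt$, hence $M_n\leq Kt\,M_n/M_\tau$ on $\{\tau\leq n\}$, and \eqref{eq:assume_convex} with $f(x)=x^{1+\eps}$ yields $\E[M_n^{1+\eps}]\leq t^{1+\eps}+(Kt)^{1+\eps}\P(\tau\leq n)\E[M_n^{1+\eps}]$. Since $\int_1^\infty\P(M_\infty^*>t)\,\dd t<\infty$ by part (i), there is a $t$ with $t\,\P(M_\infty^*>t)\leq \tfrac{1}{4K^2}$, and choosing $\eps$ with $t^\eps\leq 2$ closes the inequality (the a priori bound $M_n\leq K^n$ justifies the rearrangement). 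Your openness claims for (ii) and (iv) are also unsupported: log-convexity in the exponent makes the admissible set an interval but does not prevent the endpoint from being attained; the paper proves openness by first using Doob to get $\|M_\infty^*\|_p<\infty$ and then rerunning the same bootstrap with exponent $q$ slightly above $p$. Part (iv) inherits the gap, since your ``running-minimum analogue'' defers to the missing argument of (ii), and submultiplicativity of $\E[M_n^{-\eps}]$ alone permits exponential growth. On the positive side, your part (iii) via optional stopping at $\inf\{n:M_n>t/K\}$ is correct, does not even need \eqref{eq:assume_convex}, and gives the sharper bound $\P(M_\infty^*>t)\geq 1/(Kt)$, whereas the paper obtains \eqref{eq:strong} as the contrapositive of part (ii).
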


We give two examples to demonstrate that the assumption \eqref{eq:assume_convex} is natural, at least in context of branching processes. See \cite{AN72} and \cite{S15} for background on Galton-Watson processes and on branching random walks.


\begin{example}\label{ex:GWP}
Let $(Z_n)_{n\in\N}$ be a Galton-Watson process with $Z_0=1$ and associated martingale $M_n\coloneqq Z_n{m^{-n}}$, where $m\coloneqq \E[Z_1]$ is the expected number of offspring. Then, on $M_k>0$,
\begin{align*}
\E\left[f\Big(\frac{M_{k+l}}{M_k}\Big)\Big|\F_k\right]=
\E\left[f\Big(\frac{1}{Z_k}\sum_{i=1}^{Z_k} M_l^{(i)}\Big)\Big|\F_k\right]\leq \E\left[f\big(M_l^{(1)}\big)\Big|\F_k\right]=\E[f(M_l)],
\end{align*}
where 
$M_n^{(1)},M_n^{(2)},\dots$ are independent copies of 
$M_n$. Therefore \eqref{eq:assume_convex} holds.
\end{example}

\begin{example}\label{ex:branching}
We follow the notation from \cite{B79} for a one-dimensional branching random walk. For $\theta\in\R$, a martingale $(W^{(n)}(\theta))_{n\in\N}$ is defined by
\begin{align*}
\textstyle W^{(n)}(\theta)=(m(\theta))^{-n}\sum_{i=1}^{r^{(n)}}e^{-\theta z^{(n)}_i},
\end{align*}
where $r^{(n)}$ denotes the number of particles at time $n$ and $z^{(n)}_1,\dots,z^{(n)}_{r^{(n)}}$ their positions. To verify \eqref{eq:assume_convex}, let $\widetilde W^{(l,1)}(\theta),\widetilde W^{(l,2)}(\theta),\dots $ denote independent copies of $W^{(l)}(\theta)$. On $\{W^{(k)}(\theta)>0\}=\{r^{(k)}\geq 1\}$, we have
\begin{align}\label{eq:product}
\frac{W^{(k+l)}(\theta)}{W^{(k)}(\theta)}\isDistr\frac{\sum_{i=1}^{r^{(k)}}e^{-\theta z_i^{(k)}}\widetilde W^{(l,i)}(\theta)}{\sum_{i=1}^{r^{(k)}}e^{-\theta z^{(k)}_i}}=\sum_{i=1}^{r^{(k)}}\mu(\{i\})\widetilde W^{(l,i)}(\theta),
\end{align}
where $\mu$ is a probability measure on $\{1,\dots ,r^{(k)}\}$. Now \eqref{eq:assume_convex} follows as in Example~\ref{ex:GWP}.
\end{example}

The conclusion \eqref{eq:Lp_mart} is not new for Examples~\ref{ex:GWP} and \ref{ex:branching}, because \eqref{eq:M_n_upper} implies that the martingales converge in $L^p$ for some explicit $p>1$, see \cite[Part I.B, Theorem 2]{AN72} and \cite[p. 26]{B79}. Roughly speaking, the branching structure provides a lot of independence between particles in generation $n$, so the $p^{th}$ moment cannot be large if the influence from the early generations is small. This is not true for the directed polymer model, where the martingale has much larger correlations. The main observation in this paper is that a decomposition similar to \eqref{eq:product} is almost sufficient for $L^p$-boundedness.

\section{Proofs}\label{sec:proofs}

\begin{proof}[Proof of Theorem~\ref{thm:moments}]
\textbf{Part (i)}: We will find $\eps,\eta>0$ such that, for all $n\in\N$ and $t>1$,
\begin{align}\label{eq:todo}
\P(M_n^*>t)\leq \P(M_n>t\eps)/\eta,
\end{align}
so that
\begin{align*}
\E[M_n^*]\leq \frac{1}{\eps\eta}\E[M_n]+1=\frac{1}{\eps\eta}+1.
\end{align*}
Now, the right hand side does not depend on $n$ while the left hand side converges to $\E[M_\infty^*]$ for $n\to\infty$, so the claim follows. To prove \eqref{eq:todo}, we set, for some $\delta,\eps>0$ to be chosen later,
\begin{align*}
f_{\delta,\eps}(x)\coloneqq \delta  \Big(\frac {x}{\eps}-1\Big) \wedge 1.
\end{align*}
The important feature is that $f_{\delta,\eps}$ is concave and that, for all $x\geq 0$,
\begin{align}\label{eq:comp}
\ind_{[\eps,\infty)}(x)\geq f_{\delta,\eps}(x)\geq \ind_{[(\delta^{-1}+1)\eps,\infty)}(x)-\delta\ind_{[0,\eps]}(x).
\end{align}
Applying \eqref{eq:assume_convex} with $k=\tau$ and $l=n-\tau$, we get
\begin{equation}\label{eq:this} 
\begin{split}
\P(M_n>t\eps)&\geq \P\Big(\tau\leq n,\frac{M_n}{M_\tau}> \eps\Big)\\
&\geq \E\Big[\ind_{\tau\leq n}\E\Big[f_{\delta,\eps}\Big(\frac{M_n}{M_\tau}\Big)\Big|\F_{\tau}\Big]\Big]\\
&\geq \E\Big[\ind_{\tau\leq n}\E\Big[f_{\delta,\eps}\Big(\widetilde M_{n-\tau}\Big)\Big|\F_{\tau}\Big]\Big]\\
&\geq \P(\tau\leq n)\inf_k \E[f_{\delta,\eps}(M_{k})],
\end{split}
\end{equation}
where $\widetilde M_n$ is an independent copy of $M_n$. We have used \eqref{eq:comp} in the second inequality. For any $\delta>0$,
\begin{align*}
\inf_{k\in\N} \E[f_{\delta,\eps}(M_{k})]
&\geq \lim_{n\to\infty}\E\left[\inf_{k=0,...,n}f_{\delta,\eps}(M_{k})\right]\\
&\geq \E\left[\inf_{k\in\N}f_{\delta,\eps}(M_k)\right]\\
&\geq \P\Big(\inf_{k\in\N} M_k\geq (\delta^{-1} +1)\eps\Big)-\delta\P\left(\inf_{k\in\N} M_k\leq 
\eps\right)\\
&\xrightarrow{\eps\downarrow 0} \P(M_\infty>0)-\delta\P(M_\infty=0).
\end{align*}
The first inequality is Jensen's inequality, the second is Fatou's Lemma and the final inequality is \eqref{eq:comp}. In the last line, we used that $0$ is an absorbing state for non-negative martingales, hence
\begin{align}\label{eq:infimum}
\{M_\infty>0\}=\left\{\inf_{n\in\N}M_n>0\right\}.
\end{align}
Using \eqref{eq:M_infty_positive}, we find $\eps,\delta>0$ such that 
\begin{align*}
\inf_{k\in\N}\E[f_{\delta,\eps}(M_k)]=:\eta>0,
\end{align*}
which together with \eqref{eq:this} implies \eqref{eq:todo}.

\smallskip \textbf{Part (ii)}: We again use $\tau\coloneqq \inf\{n:M_n>t\}$, where $t>1$ will be chosen later. Assuption \eqref{eq:M_n_upper} implies that $M_n\leq tK\frac{M_n}{M_\tau}$ on $\{\tau\leq n\}$.
Therefore, for any $\eps>0$,
\begin{equation}\label{eq:nice}
\begin{split}
\E[M_n^{1+\eps}]&\leq t^{1+\eps}+\E[\ind_{\tau\leq n}M_n^{1+\eps}]\\
&\leq t^{1+\eps}+(Kt)^{1+\eps}\E\Big[\ind_{\tau\leq n}\Big(\frac{M_n}{M_\tau}\Big)^{1+\eps}\Big]\\
&\leq t^{1+\eps}+(Kt)^{1+\eps}\E\Big[\ind_{\tau\leq n}\E\left[(\widetilde M_{n-\tau})^{1+\eps}\Big|\F_\tau\right]\Big]\\
&\leq t^{1+\eps}+(Kt)^{1+\eps}\P(\tau\leq n)\E[M_n^{1+\eps}].
\end{split}
\end{equation}
The second-to-last inequality is \eqref{eq:assume_convex} and in the final inequality we used that $n\mapsto\E[f(M_n)]$ is increasing for any convex function $f$. By part (i),
\begin{align*}
\E[W^*_\infty]=\int_1^\infty \P(M_\infty^*>t)\dd t<\infty,
\end{align*}
so we find $t$ such that, for all $n\in\N$,
\begin{align*}
\P(\tau\leq n)\leq \P(W^*_\infty>t)\leq \frac{1}{4K^{2}t}.
\end{align*}
Once $t$ is fixed, we choose $\eps\in(0,1)$ such that $t^\eps\leq 2$, which by the above calculation implies
\begin{align*}
\E[M_n^{1+\eps}]\leq t^{1+\eps}+\frac 12\E[M_n^{1+\eps}].
\end{align*}
Since $n$ is arbitrary we get $\sup_n\E[M_n^{1+\eps}]\leq 2t^{1+\eps}$. Next, suppose that $\sup_n\|M_n\|_p<\infty$ for some $p>1$. Doob's maximal inequality implies that $\|M_\infty^*\|_p<\infty$ and, in particular, there exists $t>1$ such that 
\begin{align*}
\P(M_\infty^*>t)\leq \frac{1}{4K^{p+1}t^{p}}.
\end{align*}
Hence, using \eqref{eq:nice} with $q\in[p,p+1]$ in place of $1+\eps$, for any $n\in\N$,
\begin{align*}
\E[M_n^{q}]&\leq t^{q}+(Kt)^{q}\P(\tau\leq n)\E[M_n^{q}]\\
&\leq t^{q}+\frac{t^{q-p}}4\E[M_n^{q}].
\end{align*}
Now choose $q\in(p,p+1)$ such that $t^{q-p}\leq 2$, which shows $\sup_n\|M_n\|_q<\infty$.

\smallskip\textbf{Part (iii)}: If \eqref{eq:strong} fails for some $t>1$, the argument from part (ii) shows $\sup_n\E[M_n^{1+\eps}]\leq 2t^{1+\eps}<\infty$ for some $\eps>0$. Hence $(M_n)_{n\in\N}$ is uniformly integrable and $\E[M_\infty]=1$, contradicting $\P(M_\infty=0)=1$.

\smallskip\textbf{Part (iv)}: Using $\tau\coloneqq \inf\{n:M_n\leq 1/t\}$, similar calculations as in part (ii) give
\begin{align*}
\E[M_n^{-\eps}]\leq t^{\eps}+(Kt)^\eps \P(\tau\leq n)\E[M_n^{-\eps}].
\end{align*}
Using $\P(M_\infty>0)=1$ and \eqref{eq:infimum}, we have
\begin{align*}
\P(\tau\leq n)\leq \P\left(\inf_n M_n\leq 1/t\right)\xrightarrow{t\to\infty}0,
\end{align*}
so we can choose $t$ large enough that, for all $n\in\N$,
\begin{align*}
\P(\tau\leq n)\leq \frac{1}{4K^2}.
\end{align*}
Once $t$ is fixed, we choose $\eps\in(0,1)$ such that $t^\eps\leq 2$ and get
\begin{align*}
\sup_n\E[M_n^{-\eps}]<2t^\eps.
\end{align*}
\end{proof}


\begin{proof}[Proof of Theorem~\ref{thm:main}]
In view of Theorem~\ref{thm:moments}, we have to verify that $(W_n^\beta)_{n\in\N}$ satisfies \eqref{eq:assume_convex}. For $k,l\in\N$, let $Y$ be a $\Z^d$-valued random variable with law $\mu_{\omega,k}^\beta(X_k\in\cdot)$. Let
\begin{align*}
\widetilde W_l^\beta\coloneqq W_l^\beta\circ\theta_{k,Y},
\end{align*}
where $\theta_{k,y}$ denotes the space-time shift. Then $\widetilde W_l^\beta\isDistr W_l^\beta$, $\widetilde W_l^\beta$ is independent of $W_k^\beta$ and, by the Markov propery of $X$,
\begin{align}\label{eq:product2}
\frac{W_{k+l}^\beta}{W_k^\beta}
&=\sum_{y\in\Z^d}\mu_{\omega,k}^{\beta}(X_k=y)W_l^{\beta}\circ\theta_{k,y}=\E\big[\widetilde W_l^\beta\big|\omega\big].
\end{align}
Hence, by Jensen's inequality and the tower property of conditional expectation,
\begin{align*}
\E\left[f\left(\frac{W_{k+l}}{W_s}\right)\Big|\F_k\right]=\E\left[f\left(\E\big[\widetilde W_l^\beta\big|\omega\big]\right)\Big|\F_k\right]\leq \E\big[f\big(\widetilde W_l^\beta\big)\big|\F_k\big]=\E\big[f\big(W_l^\beta\big)\big].
\end{align*}
For part (ii), note that \eqref{eq:WD} implies $\P(W_\infty^\beta>0)=1$ by \eqref{eq:zero_one}, so Theorem~\ref{thm:moments}(iv) applies.
\end{proof}

\section*{Acknowledgements}
This work was supported by the JSPS Postdoctoral Fellowship for Research in Japan, Grant-in-Aid for JSPS Fellows 19F19814. The author is grateful to Ryoki Fukushima for many inspiring discussions and suggestions and to Shuta Nakajima for helpful suggestions.


\end{document}